\newtheorem{theorem}{Theorem}[section]
\newtheorem{lemma}[theorem]{Lemma}
\newtheorem{corollary}[theorem]{Corollary}
\newenvironment{proof}[1][Proof]{\begin{trivlist}
\item[\hskip \labelsep {\bfseries #1}]}{\end{trivlist}}
\newcommand{\qed}{\nobreak \ifvmode \relax \else
      \ifdim\lastskip<1.5em \hskip-\lastskip
      \hskip1.5em plus0em minus0.5em \fi \nobreak
      \vrule height0.75em width0.5em depth0.25em\fi}
\begin{document}

\title{Existence and Stability of Periodic Orbits in $N$-Dimensional Piecewise Linear Continuous Maps}

\author{Arindam Saha}
\email{arindamsaha1507@gmail.com}

\author{Soumitro Banerjee}
\email{soumitro@iiserkol.ac.in}

\affiliation{Department of Physical Sciences \\ Indian Institute of Science Education and Research Kolkata \\ Mohanpur Campus, Nadia--741246, West Bengal, India}

\pacs{05.45.-a}

\keywords{Border Collision Bifurcation, Piecewise Smooth Maps, Periodic Orbit}

\begin{abstract}
Piecewise smooth maps are known to exhibit a wide range of dynamical
features including numerous types of periodic orbits. Predicting
regions in parameter space where such periodic orbits might occur and
determining their stability is crucial to characterize the dynamics of
the system. However, obtaining the conditions of existence and
stability of these periodic orbits generally use brute force methods
which require successive application of the iterative map on a
starting point. In this article, we propose a faster and more elegant
way of obtaining those conditions without iterating the complete
map. The method revolves around direct computation of higher powers of
matrices without computing the lower ones and is applicable on any
dimension of the phase space. In the later part of the article, we
compare the speed of the proposed method with the other popular
algorithms which shows the effectiveness of the proposed method in
higher dimensions. We also illustrate the use of this method in
computing the regions of existence and stability of a particular class
of periodic orbits in three dimensions.
\end{abstract}

\maketitle

\section{Introduction}

Presence of stable and unstable periodic orbits play a vital role in
determining the dynamical properties of a system. While presence of
stable periodic orbits form the basins of attraction, the unstable
periodic orbits play a crucial role in forming chaotic orbits and the
basin boundaries of the attractors. In fact appearance, disappearance
or change in stability of these periodic orbits are responsible for
most bifurcation phenomena. Hence an important component in
characterising a given system is determining the parameters for which
a particular periodic orbit might exist.

Piecewise smooth maps are useful in describing systems whose evolution
is given by different smooth functions in different regions of
space. Examples of piecewise smooth systems include switching
electrical circuits
\cite{deane1990instability,kousaka1999bifurcation}, impacting
mechanical systems \cite{nordmark1991non}, walking robots
\cite{thuilot1997bifurcation,garcia1998simplest}, cardiac dynamics
\cite{sun1995alternans} and neural spiking
\cite{borgers2003synchronization}. Apart from their applicability,
piecewise smooth maps have also attracted attention due to their rich
dynamical properties. Even maps which are piecewise linear and have
non-linearities only across a switching manifold, exhibit features
like robust chaos \cite{banerjee1998robust} and existence of
infinitely many co-existing attractors
\cite{simpson2014sequences}. Study of piecewise linear maps attains
even more importance as they can describe the behaviour of the
piecewise smooth systems near the border separating the partitions of
the phase space, in which case the matrices take a specific form. This
article aims to study the conditions of existence and stability of
periodic orbits in such piecewise linear maps in any arbitrary dimension.

Periodic orbits in piecewise linear one, two and three dimensional
maps have been studied
\cite{nusse1992border,roy2008border,avrutin2012occurrence}. Some of
the works have also computed the regions in parameter space where
specific types of orbits exist and are stable
\cite{avrutin2012occurrence,gardini2010border,ganguli2005dangerous,panchuk2015bifurcation}. However
most of the studies used repeated application of the iterative map to
obtain them.  An iterative technique, invented by the Russian
mathematician Leonov
\cite{leonov1959map,leonov1962discontinuous,panchuk2015bifurcation}
has also been used to obtain the existence condition of periodic
orbits
\cite{gardini2010border,avrutin2010self,tramontana2012period}. Moreover,
the methods used there were specific to the type of orbit being
analysed.

In this work, we seek to obtain a generic and faster method of finding
the existence and stability criteria for a general class of periodic
orbits. The technique developed might be extended to any periodic obit
in piecewise linear maps of any dimension. Moreover, as the technique
is essentially an algorithm to obtain higher iterates of the map
without going through the intermediate ones, it can be used to
simplify extraction of other relevant information about the
map. Additionally, the maps we work on are in their normal
forms. Hence the ambit of the results obtained spreads across all
piecewise smooth maps as appropriate coordinate transformation near
the border of non-smoothness yields the normal form of the maps.

The paper is organized as follows. After defining the notations and
conventions used in the article in the Section-2, we derive the
generic conditions for existence and stability of periodic orbits in a
dimension independent form. We then develop a technique which exploits
the form of the map to obtain the conditions of existence and
stability in $N$ dimensions. We showcase the utility of the technique
developed by computing the parameter regions in which certain
classes of stable periodic orbits exist in a three dimensional
piecewise linear map in the next section. Thereafter, we take the
specific case of two dimensional systems, where the simplicity of the
map allows us to obtain stronger analytical results. Finally, we
devote a section to exhibit the computational efficiency of the
technique developed by comparing it with other traditionally used
methods for computing the existence and stability criteria of periodic
orbits in piecewise linear maps.

\section{The Piecewise Linear Map in the Normal Form}

In earlier literature it has been shown that, on proper choice of axes, any $N$ dimensional piecewise smooth continuous map can be linearised near the border to have the following form \cite{nusse1992border}
\begin{equation}
  G_\mu(X)=\left\{
  \begin{array}{lr}
   M_L X + \zeta  & : x_1 \le 0\\
~&~\\
   M_R X + \zeta & : x_1 > 0
  \end{array} \right.
  \label{eq: Map}
\end{equation}
where $M_L, M_R \in \mathbb{R}^{N \times N}$ are matrices in their normal form given by \cite{di2003normal}
\begin{equation}
M_J =
\begin{pmatrix}
- d^{(J)}_1 & 1 & 0 & \cdots  & 0 \\
- d^{(J)}_2 & 0 & 1 & \cdots & 0 \\
\vdots & \vdots & \vdots & \cdots & \vdots \\
- d^{(J)}_{N-1} & 0 & 0 & \cdots  & 1 \\
- d^{(J)}_N & 0 & 0 & \cdots & 0
\end{pmatrix} ~~~ : J \in \left\lbrace L,R \right\rbrace
\label{eq: Evolution Matrices Intermediate}
\end{equation}
with $d^{(J)}_i$ being the coefficient of $\lambda^i$ in the characteristic polynomial of $M_J$; $X=\left( x_1, \ldots, x_N \right)^T \in \mathbb{R}^N$ being a generic point in the phase space and $\zeta=\left(\mu, \ldots, 0 \right)^T$. Since the coefficient of $\lambda^i$ in the characteristic polynomial of a matrix is the sum of eigenvalues of the matrix taken $i$ at a time (apart from an alternating sign); we recast \eqref{eq: Evolution Matrices Intermediate} as
\begin{equation}
M_J =
\begin{pmatrix}
\rho^{(J)}_1 & 1 & 0 & \cdots  & 0 \\
- \rho^{(J)}_2 & 0 & 1 & \cdots & 0 \\
\vdots & \vdots & \vdots & \cdots & \vdots \\
(-1)^{N-2} ~ \rho^{(J)}_{N-1} & 0 & 0 & \cdots  & 1 \\
(-1)^{N-2} ~ \rho^{(J)}_N & 0 & 0 & \cdots & 0
\end{pmatrix} ~~~ : J \in \left\lbrace L,R \right\rbrace
\label{eq: Evolution Matrices}
\end{equation}
for further analysis. Here $\rho^{(J)}_i$ is the sum of the eigenvalues of $M_J$ taken $i$ at a time, i.e., $\rho^{(J)}_i=(-1)^{i-1} d^{(J)}_i$. For instance, in two and three dimensions, the matrix in \eqref{eq: Evolution Matrices} takes the form,
\begin{equation}
M_J = \begin{pmatrix}
\tau_J & 1 \\
-\delta_J & 0
\end{pmatrix} ~~~ : J \in \left\lbrace L,R \right\rbrace
\label{eq: Evolution Matrix 2D Initial}
\end{equation}
and
\begin{equation}
M_J =
\begin{pmatrix}
\tau_J & 1 & 0 \\
- \sigma_J & 0 & 1 \\
\delta_J & 0 & 0
\end{pmatrix} ~~~ : J \in \left\lbrace L,R \right\rbrace
\label{eq: Evolution Matrix 3D Initial}
\end{equation}
respectively. Here $\tau_J$ and $\delta_J$ are the trace and determinant the $M_J$; and $\sigma_J$ is the sum of the eigenvalues of $M_J$ taken two at a time.

Any $p$-periodic orbit in such a system can be represented by a sequence of points $\left\lbrace X_0, \ldots, X_i, \ldots, X_{p-1} \right\rbrace$ such that $X_{i+1}=G_\mu (X_i)$ where $i$ is a natural number or zero, and $X_{p}=X_0$. We can also associate a symbol to each point on the periodic orbit depending on the partition in which it lies. If a point of the periodic orbit has $x_1 \le 0$, it is assigned the symbol $L$ (meaning left). Otherwise it is assigned the symbol $R$ (meaning right). This associates a sequence of $R$ and $L$ to each periodic orbit. For example, an $LLLR$ (also written as $L^3R$) orbit consists of three points with $x_1 \le 0$ and a single point with $x_1>0$. To remove the ambiguity arising from possible cyclic permutation of the points, in this article we adopt the following convention: We number the points $X_i$ such that $X_0$ is assigned the symbol $R$ and $X_p$ is assigned the symbol $L$. However while referring of the orbits, we collect the symbols in the reverse order such that the symbol for the orbit starts with $L$ and ends with $R$. In this article, for sake of simplicity, we would restrict our explicit analysis to orbits of the form $L^mR^n$ where $m$ and $n$ are natural numbers. However, the theory developed here can be extended to the general finite period orbit $L^{n_1}R^{n_2} \ldots L^{n_{s-1}}R^{n_s}$. Ways to extend the theory for the general cases will be indicated wherever required. Note that since the map in each of the partitions is linear, a periodic orbit lying completely within any one of the partitions does not exist.

\section{Existence and Stability of Periodic Orbits}

In 1959, Leonov \cite{leonov1959map,leonov1962discontinuous} did a detailed study of nested period adding bifurcation structure occurring in piecewise-linear discontinuous 1D maps. The algorithmic way proposed in his work was recently used\cite{gardini2010border} to analyse border collision bifurcations in one dimensional maps. We extend the analysis to obtain the existence criteria for period orbits in $N$ dimensions.

Consider an $L^mR^n$ orbit formed by the points $\left\lbrace X_0, \ldots, X_{m+n-1} \right\rbrace$. According to the convention described in the last section, we assume the points $X_0, \ldots, X_{m-1}$ have $x_1 > 0$ and the points $X_m, \ldots, X_{m+n-1}$ have $x_1 \le 0$. Hence the evolution of $X_0$ under the map $G_\mu$ is given as,
\begin{align*}
X_1 &= M_R X_0 + \zeta \\
X_2 &= M_R^2 X_0 + \left( I + M_R \right) \zeta \\
X_3 &= M_R^3 X_0 + \left( I + M_R + M_R^2 \right) \zeta \\
\vdots & ~~~~~~~~~~	 \vdots \\
X_{n-1} &= M_R^{n-1} X_0 + \phi_{R,n-1} \zeta \\
X_n &= M_R^n X_0 + \phi_{R,n} \zeta \\
X_{n+1} &= M_L M_R^n X_0 + M_L \phi_{R,n} \zeta + \zeta \\
\vdots & ~~~~~~~~~~	 \vdots \\
X_{m+n-1} &= M_L^{m-1} M_R^n X_0 + \left( M_L^{m-1} \phi_{R,n} + \phi_{L,m-1} \right) \zeta \\
X_{m+n} &= M_L^m M_R^n X_0 + \left( M_L^m \phi_{R,n} + \phi_{L,m} \right) \zeta.
\end{align*}
Here $I$ is the $N \times N$ identity matrix and $\phi_{J,k} = I + M_J + M_J^2 + \ldots + M_J^{k-1}$. By  using the formula for GP of matrices, it can be written as, 
\begin{equation}
\phi_{J,k} = \left( I - M_J \right)^{-1} \left( I - M_J^k \right) ~~~ :J \in \left\lbrace L,R \right\rbrace 
\label{eq: phi Definition}
\end{equation}
if $I-M_J$ is invertible.

On substituting $X_{m+n}=X_0$, we get the expression for $X_0$ in terms of known quantities,
\begin{equation}
X_0 = \left( I - M_L^m M_R^n \right)^{-1} \left( M_L^m \phi_{R,n} + \phi_{L,m} \right) \zeta
\label{eq: Existence}
\end{equation}
when $\left( I - M_L^m M_R^n \right)$ is invertible. Once $X_0$ is determined, all other points of the orbit can be calculated by evolving $X_0$ under the map $G_\mu$. For existence of the $L^mR^n$ orbit, we need to ensure that all points of the periodic orbit are in their correct partitions. 


For periodic orbits to be stable, we require the trace $T$ and determinant $\Delta$ of  the ordered product of the Jacobian matrices at each point of the periodic orbit to follow
\begin{equation}
(1-\Delta) < T < (1+\Delta).
\label{eq: Stability}
\end{equation}
Since the Jacobian at any point with $x_1 \le 0$ is $M_L$ and that at any point with $x_1>0$ is $M_R$, the matrix whose trace and determinant are in question is $M_L^mM_R^n$.

For the generic case of $L^{n_1} R^{n_2} \ldots L^{n_{s-1}} R^{n_s}$ orbits, the algorithm of finding the existence and stability conditions of the orbits remain the same. On explicit calculation, the expression for the the point $X_0$ on the periodic orbit comes out to be
\begin{equation}
X_0 = \left( I - \prod_{j=1}^{s} M_K^{n_j} \right)^{-1} \left( \sum_{i=1}^{s} \left( \prod_{j=1}^{i-1} M_K^{n_j} \right) \phi_{K,n_i} \right) \zeta
\label{eq: Existence General}
\end{equation}
where $K$ is the symbol $L$ or $R$ when $j$ is odd or even respectively. The condition for stability remains identical to \eqref{eq: Stability}, with the only difference that now $T$ and $\Delta$ denote the trace and determinant of the matrix $\prod\limits_{j=1}^{s} M_K^{n_j}$.

A close look at equations \eqref{eq: Existence}, \eqref{eq: Stability} and \eqref{eq: Existence General} reveal that, the primary requirement in evaluating those expressions is to compute powers of $M_L$ and $M_R$. Typically this is done algorithmically by brute force matrix multiplication. The following sections aims to provide an algebraic way to compute the powers of the matrices and hence to analytically compute the regions of existence and stability of the periodic orbits.

\section{Computing $M^n$ for $N$ Dimensions}

In this section, we present a technique for computing $n^{th}$ powers of the matrix $M$, given by
\begin{equation}
M =
\begin{pmatrix}
\rho_1 & 1 & 0 & \cdots  & 0 \\
- \rho_2 & 0 & 1 & \cdots & 0 \\
\vdots & \vdots & \vdots & \cdots & \vdots \\
(-1)^{N-2} ~ \rho_{N-1} & 0 & 0 & \cdots  & 1 \\
(-1)^{N-1} ~ \rho_N & 0 & 0 & \cdots & 0
\end{pmatrix}.
\label{eq: M Definition}
\end{equation}

Note that this matrix is the same as the matrices $M_L$ or $M_R$ defined in \eqref{eq: Evolution Matrices} except that the subscripts $L$ and $R$ are ignored. This is done for notational simplicity. The results derived here are applicable to both $M_L$ and $M_R	$.

The $n^{th}$ power of any general $N \times N$ matrix
\begin{equation}
A = \begin{pmatrix}
\theta_{1,1} & \cdots & \theta_{1,N} \\
\vdots & \ddots & \vdots \\
\theta_{N,1} & \cdots & \theta_{N,N} \\
\end{pmatrix}
\end{equation}
can be written in the form
\begin{equation}
A^n = \begin{pmatrix}
\theta^{(n)}_{1,1} & \cdots & \theta^{(n)}_{1,N} \\
\vdots & \ddots & \vdots \\
\theta^{(n)}_{N,1} & \cdots & \theta^{(n)}_{N,N} \\
\end{pmatrix}
\end{equation}
The sequence of matrices $A, A^2, A^3 \cdots A^n$ is composed of sequences of its individual elements, and therefore $N^2$ such independent sequences are needed to construct the matrix $A^n$. However, we show that due to the structure of the matrix $M$ in \eqref{eq: M Definition},  only $N$ sequences are required to construct the matrix $M^n$.


Let each of the $N$ sequences be denoted by $\Gamma_i$. Let $\Gamma_{i,j}$ denote the $j^{th}$ term of the sequence $\Gamma_i$. Also note that for notational simplicity in the further analysis, we start numbering the terms of the sequence $\Gamma_i$ from $2-N$ instead of 1. For example, in order to compute $M^n$ in three dimensions, we would require three sequences: $\Gamma_1$, $\Gamma_2$ and $\Gamma_3$; and the terms of the sequences will be numbered as $\Gamma_{1,-1}, \Gamma_{1,0}, \Gamma_{1,1}, \Gamma_{1,2}, \cdots$ for the first sequence, $\Gamma_{2,-1}, \Gamma_{2,0}, \Gamma_{2,1}, \Gamma_{2,2}, \cdots$ for the second sequence and $\Gamma_{3,-1}, \Gamma_{3,0}, \Gamma_{3,1}, \Gamma_{3,2}, \cdots$ for the third sequence.

The terms of this sequence shall be defined iteratively. However, before giving the iterative relation, we first give the first $N$ terms of the sequences. This initialisation is done such that the $N$ elements of the $i^{th}$ row of $M$ give the first $N$ terms of $\Gamma_i$ in the reverse order, i.e.,
\begin{equation}
M=\begin{pmatrix}
\Gamma_{1,1} & \Gamma_{1,0} & \cdots & \Gamma_{1,2-N} \\
\Gamma_{2,1} & \Gamma_{2,0} & \cdots & \Gamma_{2,2-N} \\
\vdots & \vdots & \cdots & \vdots \\
\Gamma_{N,1} & \Gamma_{N,0} & \cdots & \Gamma_{N,2-N}
\end{pmatrix}.
\label{eq: Initialisation}
\end{equation}
Again taking the three dimensional case as an example, we would have the initialisation as,
\begin{align}
\Gamma_{1,-1} = 0, ~ \Gamma_{1,0} = 1, \Gamma_{1,1} = \tau \nonumber \\
\Gamma_{2,-1} = 1, ~ \Gamma_{2,0} = 0, \Gamma_{2,1} = -\sigma \label{eq: Example} \\
\Gamma_{3,-1} = 0, ~ \Gamma_{3,0} = 0, \Gamma_{3,1} = \delta. \nonumber
\end{align}

Note the due to the initialisation given above, the terms from $\Gamma_{i,2-N}$ to $\Gamma_{i,1}$ are defined. Now, the further terms of the sequence are defined iteratively as
\begin{equation}
\Gamma_{i,j} = \sum_{k=1}^N (-1)^{k-1} ~ \rho_k ~ \Gamma_{i,j-k} ~~~ : j \in [2, \infty).
\label{eq: Iteration}
\end{equation}


With this definition of $\Gamma_{i,j}$, the matrix $M^n$ can be written as
\begin{equation}
M^n =
\begin{pmatrix}
\Gamma_{1,n} & \Gamma_{1,n-1} & \ldots & \Gamma_{1,n-(N-1)} \\
\Gamma_{2,n} & ~ & ~ & \Gamma_{2,n-(N-1)} \\
\vdots & ~ & ~ & \vdots \\
\Gamma_{N,n} & \Gamma_{N,n-1} & \ldots & \Gamma_{N,n-(N-1)}
\end{pmatrix}
\label{eq: General Result}
\end{equation}
or more explicitly as
\begin{equation}
\left[ M^n \right]_{i,j}=\Gamma_{i,n-(j-1)} ~~~ \forall n \in \mathbb{N}.
\label{eq: General Result Elementwise}
\end{equation}

The proof of the result hinges on the structure of the matrix $M$. Note that \eqref{eq: General Result Elementwise} is bound to be satisfied for $n=1$ due to the way the series $\Gamma_i$ is initialised in \eqref{eq: Initialisation}. Also note that, apart from the first column, the only non-zero elements of the matrix $M$ are in the superdiagonal and are all equal to 1. Hence, while obtaining $M^{k+1}$ from $M^k$, when $M$ is multiplied from the right, the columns of $M^k$ are simply shifted to right, except the rightmost column which is lost. Therefore, only the first column needs to be computed, which is given by \eqref{eq: Iteration}. The detailed proof of the result is given in Appendix \ref{app: Generic}.

Hence in order to compute $M^{k+1}$ from $M^k$, one needs to compute only $N$ new elements corresponding to the first column of the matrix; as compared to computing $N^2$ new elements for a generic $N \times N$ matrix. 

Having described the algorithm of finding the $n^{th}$ power of an $N$ dimensional matrix, we apply the algorithm to 3 dimensional piecewise linear maps to compute the regions in parameter space where stable periodic orbits might exist.

\section{Application: Stable $L^nR$ Orbits in 3 Dimensions}

In 2012, parameter regions where stable $L^nR$ orbits exist in two dimensions were computed to find the parameter regions of multiple attractor bifurcations \cite{avrutin2012occurrence}. In this section, we extend the result to demonstrate the use of the technique developed in this article to find the regions of existence of stable $L^nR$ periodic orbits in 3 dimensions. As the parameter space is 6 dimensional, we show the two dimensional projection of the plausible regions in parameter space.

\begin{figure}
\includegraphics[width=0.45\textwidth]{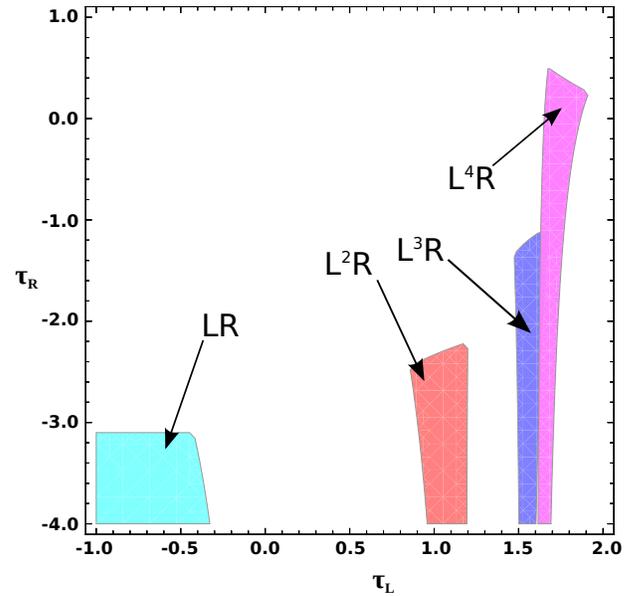}
\caption{Regions of the projected parameter space where stable $L^nR$ orbits exist. The plot has been made by setting $\sigma_L=\sigma_R=1.4$, $\delta_L=\delta_R=0.7$ and $\mu=1$.}
\label{fig: Parameter Plot}
\end{figure}

In three dimensions, the normal matrix \eqref{eq: M Definition} takes the form
\begin{equation}
M =
\begin{pmatrix}
\tau & 1 & 0 \\
- \sigma & 0 & 1 \\
\delta & 0 & 0
\end{pmatrix}.
\end{equation}
Hence, by \eqref{eq: General Result}
\begin{equation}
M^n
=
\begin{pmatrix}
\Gamma_{1,n} & \Gamma_{1,n-1} & \Gamma_{1,n-2} \\
\Gamma_{2,n} & \Gamma_{2,n-1} & \Gamma_{2,n-2} \\
\Gamma_{3,n} & \Gamma_{3,n-1} & \Gamma_{3,n-2}
\end{pmatrix}
\end{equation}
where
\begin{equation}
\Gamma_{i,n} = \tau \Gamma_{i,n-1} - \sigma \Gamma_{i,n-2} + \delta \Gamma_{i,n-3}
\end{equation}
for $i \in \left\lbrace 1,2,3 \right\rbrace$ and $n>1$. For $-1 \le n \le 1$, the terms are taken from $M$ according to \eqref{eq: Example}. Using these expressions, we compute the required powers of $M_L$ and $M_R$, and substitute them in \eqref{eq: Existence} and \eqref{eq: Stability} to obtain the required conditions for existence and stability of $L^nR$ orbits for various values of $n$. The parameter values for which the stable orbits exist are shown in Fig.~\ref{fig: Parameter Plot} where the plausible regions are shown in the two dimensional projection of the six dimensional space.

Now let us consider the special case of 2 dimensions where further simplification can be done and the $n^{th}$ power of $M$ can be computed non-iteratively.

\section{Computing $M^n$ for 2 Dimensions}

In two dimensions the matrix appearing in the normal form map is given by \eqref{eq: Evolution Matrix 2D Initial}.
 Dropping the subscripts for notational simplicity as in the previous section gives us
\begin{equation}
M = \begin{pmatrix}
\tau & 1 \\
-\delta & 0
\end{pmatrix}.
\label{eq: M Definition 2D}
\end{equation}

From the results obtained in the previous section, it can be said that $M^n$ would be determined by two independent sequences $a$ and $b$ and would be of the form
\begin{equation}
M^n = \begin{pmatrix}
a_n & a_{n-1} \\
b_n & b_{n-1}
\end{pmatrix}.
\label{eq: Result 2D Intermediate}
\end{equation}

However on explicit calculation (as done in Appendix \ref{app: Motivation}) it can be shown that the terms of the two sequences are related as
\begin{equation}
b_i = -\delta a_{i-1} ~~~ \forall i \geq 0.
\label{eq: an bn relation}
\end{equation}

Substituting \eqref{eq: an bn relation} in \eqref{eq: Result 2D Intermediate} gives the final form of $M^n$ as
\begin{equation}
M^n=\begin{pmatrix}
a_n & a_{n-1} \\
-\delta a_{n-1} & -\delta a_{n-2}
\end{pmatrix} ~~~ \forall n \in \mathbb{N}
\label{eq: M^n Form 2D}
\end{equation}
where
\begin{equation}
a_n = \tau a_{n-1} - \delta a_{n-2} ~~~ \forall i \in \mathbb{N}
\label{eq: Propagation Rule 2D}
\end{equation}
with initial conditions $a_0=1$ and $a_{-1}=0$. Apart from the iterative definition, it is also possible to explicitly determine $a_n$ in terms of known quantities.

In terms of $\tau$ and $\delta$, $a_n$ is given as
\begin{equation}
a_n = \sum\limits_{m=0}^{\left[\dfrac{n}{2}\right]} \left( -1 \right)^m ~^{n-m}C_m~ \delta^m ~ \tau^{n-2m} ~~~ \forall n \ge 0
\label{eq: a_n Parameters}
\end{equation}
and $a_{-1}=0$. Here $[\cdot]$ is the greatest integer function and $~^nC_r$ is the coefficient of $x^r$ in the binomial expansion of $(1+x)^n$. Using the properties of $~^nC_r$, it can be shown the $a_n$ as expressed in \eqref{eq: a_n Parameters} satisfies \eqref{eq: Propagation Rule 2D}. However as the proof is lengthy, it is given in Appendix \ref{app: Fundamental Sequence}.

We can also obtain another representation of $a_n$ if the results are expressed in terms of the eigenvalues of $M$,
\begin{equation}
\lambda_{1,2} = \dfrac{\tau \pm \sqrt{\tau^2-4\delta}}{2}.
\end{equation}
To do so, we put $\tau=\lambda_1+\lambda_2$ and $\delta=\lambda_1\lambda_2$ in \eqref{eq: M Definition 2D}. We then decompose $M$ as $M=UDU^{-1}$ where $D$ is the diagonal matrix with $\lambda_1$ and $\lambda_2$ on its diagonals and $U$ is the matrix with the eigenvectors of $M$ as the columns. Then $M^n=UD^nU^{-1}$, which when computed explicitly gives
\begin{equation*}
M^n=\dfrac{1}{\lambda_2 - \lambda_1}
\begin{pmatrix}
\lambda_2^{n+1} - \lambda_1^{n+1} & \lambda_2^n - \lambda_1^n \\
\lambda_1^{n+1}\lambda_2 - \lambda_2^{n+1}\lambda_1 & \lambda_1^n\lambda_2 - \lambda_2^n\lambda_1
\end{pmatrix}
\end{equation*}
which can be recast into the form of \eqref{eq: M^n Form 2D} with the definition of $a_n$ as
\begin{equation}
a_n = \dfrac{\lambda_1^{n+1}-\lambda_2^{n+1}}{\lambda_1-\lambda_2}.
\label{eq: a_n Eigen}
\end{equation}
The detailed proof of this result is given in the Appendix \ref{app: Eigen}. It can also be directly seen that \eqref{eq: a_n Eigen} satisfies \eqref{eq: Propagation Rule 2D}.

Hence in order to compute $M^n$ for a $2 \times 2$ matrix in the form \eqref{eq: M Definition 2D}, we use \eqref{eq: a_n Parameters} or \eqref{eq: a_n Eigen} to get $a_n$, $a_{n-1}$, $a_{n-2}$ and form the matrix given in \eqref{eq: M^n Form 2D}.

The form of the matrix $M^n$ in \eqref{eq: M^n Form 2D} may be substituted directly into \eqref{eq: phi Definition} to obtain the sum of the GP as
\begin{equation}
\phi_n = \begin{pmatrix}
f_n & f_{n-1} \\
- \delta f_{n-1} & 1 - \delta f_{n-2}
\end{pmatrix}
\label{eq: phi_n Form}
\end{equation}
where
\begin{equation}
f_n = \dfrac{1 - a_n + \delta a_{n-1}}{1 - \tau + \delta}.
\label{eq: f_n Form}
\end{equation}
The detailed proof of this result is given in Appendix \ref{app: phi}. Note that the proof is independent of the explicit form of $a_n$ and uses only the propagation rule \eqref{eq: Propagation Rule 2D}.

\section{Comments on Speed of the Algorithm}

Although results in Section-5 demonstrate the use of the algorithm described in this article, the effectiveness of the proposed algorithm can be gauged better in higher dimensional systems where computation of existence and stability conditions becomes computationally intensive due to large orders of matrices involved. In this section, we show the efficiency of the proposed method against the traditional methods as the dimensions of the system increases. To do this we compare the times required by the proposed method to compute powers of the matrix $M_{L/R}$ for various dimensions against the times required by other frequently used methods for the same computation. 

First let us look at the computational complexity of the proposed algorithm. As noted erlier, the efficiency of the proposed method lies in the structure of the matrices $M_{L/R}$. We note from \eqref{eq: General Result Elementwise} that
\begin{equation}
\left[ M^n \right]_{i,j}=\Gamma_{i,n-(j-1)} = \Gamma_{i,(n+1)-((j+1)-1)}=\left[ M^{n+1} \right]_{i,j+1}.
\end{equation}
This implies that in $N$-dimensions, the first $N-1$ columns of $M^n$ is identical to the last $N-1$ columns of $M^{n+1}$. Hence, to compute $M^{n+1}$ from $M^n$, we need to compute only one new column of the matrix. The construction of this new column is described by \eqref{eq: Iteration}. Computation of each term of the column requires $N$ multiplications. As there are $N$ elements in the row, the total number of multiplications to be performed to raise the $N$-dimensional matrix $M$ to the power $n$ is $N^2n$. Hence, the proposed algorithm has the computational complexity of the order $\mathcal{O}(N^2n)$.

Two of the most common methods for computing the powers of matrices are brute-force matrix multiplication and matrix diagonalisation. Brute-force matrix multiplication simply multiplies the matrices successively; hence making computation of $n^{th}$ power of an $N$-dimensional matrix an $\mathcal{O}(N^3n)$ process. On the other hand, matrix diagonalisation seeks to diagonalise the matrix $M$ as $M=UDU^{-1}$ and then compute the $n^{th}$ power as $M^n=UD^nU^{-1}$. While the method seems elegant, it involves diagonalisation of the matrix; making it an $\mathcal{O}(N^3)$ process \cite{arfken2005mathematical}. Even with more sophisticated matrix multiplications like Strassen algorithm~\cite{strassen1969gaussian} or Coppersmith-Winograd algorithm~\cite{le2014powers,davie2013improved,coppersmith1997rectangular}, one obtains a order complexity of $\mathcal{O}(N^qn)$ with $q>2$. Comparing the order complexity of the proposed method (which is $\mathcal{O}(N^2n)$) with that of brute force matrix multiplication or matrix diagonalisation, we see that the proposed algorithm is better than the other methods at least for large enough matrix dimension.

\begin{figure}
\begin{subfigure}{0.45\textwidth}
\includegraphics[width=\textwidth]{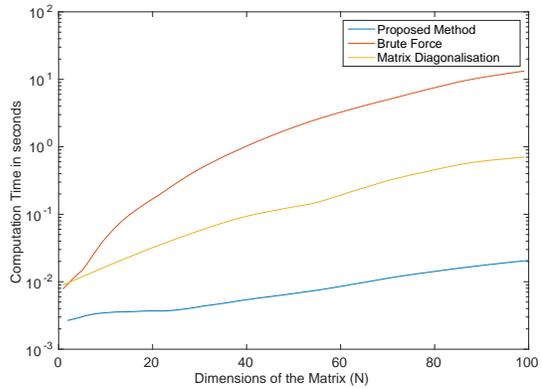}
\caption{Power computed, $m=10$}
\end{subfigure}
\begin{subfigure}{0.45\textwidth}
\includegraphics[width=\textwidth]{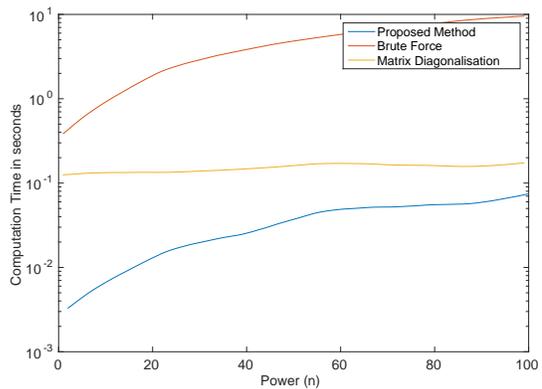}
\caption{Dimension of the Matrix, $N=50$}
\end{subfigure}
\caption{Comparison of the proposed algorithm of raising matrices to powers with the pre-existing methods. Plots show the time taken to perform 100 typical calculations by each of the algorithms. For each dimension and power, 1000 sets of 100 random matrices of the form \eqref{eq: M Definition} each were taken and the total time taken for computation was averaged over the 1000 sets.}
\label{fig: Comparison}
\end{figure}

However, to get an estimate for realistic dimensions and powers, we computed the powers of random matrices of the form \eqref{eq: M Definition} on a computer using the three algorithms: a) the proposed algorithm, b) matrix diagonalisation and c) brute force matrix multiplication to compare their run-times. Figure~\ref{fig: Comparison} shows the time taken by the different algorithms on the same computer to compute the powers of random matrices of the form \eqref{eq: M Definition}. As can be evidently seen, the efficiency of the proposed method, when compared to the other algorithms increases with increase in the dimension of the matrices when the power to which the matrices are raised is kept constant. When the matrix power is increased keeping the dimension of the matrix fixed, we see that the gain provided by the proposed algorithm as compared to matrix diagonalisation reduces as $n$ is increased. However, the proposed algorithm performs better than matrix diagonalisation for a significantly large range of $n$. Only for very high powers of matrices is matrix diagonaisation a better algorithm than the proposed one.

Hence the proposed algorithm is seen to perform better than the competing algorithms for high dimensional systems when we are interested in sufficiently lower powers of the matrix. Physically, this corresponds to periodic orbits of sufficiently low periodicity in high dimensional phase space. Non-smooth dynamical systems with high dimensionality occur in many real-life electrical, electronic and robotic systems where the dimensions of the phase space can well go over 50 due the presence of many components. The proposed algorithm might hence be applied to such dynamical systems to increase computational efficiency.

\section{Conclusion}

In this article we developed a faster and a more elegant technique to compute the existence and stability conditions for periodic orbits of the form $L^mR^n$ in an arbitrary dimensional piecewise linear continuous map. The technique is based on easier computation of powers of $N \times N$ matrices in their normal form. Due to the structure of the matrices involved, it was found that the elements of the resulting matrix were interrelated; and in order to compute the $n^{th}$ power of the matrix, only $N$ out of the $N^2$ elements need to be computed. These $N$ elements in turn can be obtained as simple sequences defined iteratively. Once the powers of the matrices are computed, they can be substituted in the generic expressions of existence and stability of orbits to obtain the regions in parameter space where they exist. We also apply the technique developed to 3 dimensional systems and obtain the regions where $L^nR$ orbits exist.

Moreover, in the special case of 2 dimensional matrices, further simplifications were made. Notably, explicit expressions for the terms of the sequence in terms of the given parameters and eigenvalues of the matrices were obtained. This allows for a direct evaluation of any power of the $2 \times 2$ normal form matrix without computing the intermediate powers.


\section*{Acknowledgements}

The authors would like to thank Viktor Avrutin for constructive suggestions on the earlier versions of the article. A. S. would also like to thank Matthias Schr\"oder for fruitful discussions on the computational complexity of the method.

\bibliography{NonSmooth}

\clearpage
\widetext

\appendix

\begin{center}
\textbf{\large Appendix: Derivations and Motivations}
\end{center}

\section{Finding $M^n$ for a Matrix $M$ in the Normal Form}
\label{app: Generic}

\begin{theorem}
Let $M$ a matrix of the form \eqref{eq: M Definition}. Then the elements of $M^n$ are given as
\begin{equation}
\left[ M^n \right]_{i,j} = \Gamma_{i,n-(j-1)} ~~~ \forall n \in \mathbb{N}
\label{eq: General Result Appendix}
\end{equation}
where
$\left[ A \right]_{i,j}$ is the element of $A$ corresponding to $i^{th}$ row and $j^{th}$ column and $\Gamma_{i,j}$ is defined in \eqref{eq: Initialisation} and \eqref{eq: Iteration}.
\end{theorem}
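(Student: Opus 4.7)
The plan is to prove the theorem by induction on $n$. The base case $n=1$ will be immediate from the initialization in \eqref{eq: Initialisation}: by construction, the terms $\Gamma_{i,\,2-N},\ldots,\Gamma_{i,0},\Gamma_{i,1}$ are placed into the $i$-th row of $M$ read from right to left, so $[M]_{i,j}=\Gamma_{i,\,1-(j-1)}$, which is exactly \eqref{eq: General Result Appendix} at $n=1$.

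For the inductive step, I would assume \eqref{eq: General Result Appendix} at some $n\geq 1$ and compute $M^{n+1}=M^n\cdot M$ entry by entry via $[M^{n+1}]_{i,j}=\sum_{k=1}^{N}[M^n]_{i,k}[M]_{k,j}$. The key structural observation is that apart from its first column, the only nonzero entries of $M$ are the superdiagonal $1$'s, i.e., $[M]_{k,k+1}=1$ for $k=1,\ldots,N-1$. Consequently, for any column index $j\geq 2$ the sum collapses to the single surviving term $[M^n]_{i,j-1}$, and the inductive hypothesis rewrites it as $\Gamma_{i,\,n-(j-2)}=\Gamma_{i,\,(n+1)-(j-1)}$. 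This is the right-shift property advertised in the motivating discussion: right-multiplication by $M$ sends columns $1,\ldots,N-1$ of $M^n$ into columns $2,\ldots,N$ of $M^{n+1}$, so only the new first column needs genuine work.

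For that remaining case $j=1$, the entire first column of $M$ contributes, giving $[M^{n+1}]_{i,1}=\sum_{k=1}^{N}(-1)^{k-1}\rho_k[M^n]_{i,k}=\sum_{k=1}^{N}(-1)^{k-1}\rho_k\,\Gamma_{i,\,n-(k-1)}$ by induction. I would then compare this against the defining recurrence \eqref{eq: Iteration} specialized to index $j=n+1$, namely $\Gamma_{i,\,n+1}=\sum_{k=1}^{N}(-1)^{k-1}\rho_k\,\Gamma_{i,\,n+1-k}$. The two sums agree term by term, closing the induction and delivering $[M^{n+1}]_{i,1}=\Gamma_{i,\,n+1}$ as required.

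The main obstacle is essentially bookkeeping rather than substance: one must verify that the inductive step only ever invokes \eqref{eq: Iteration} at indices lying in its stated domain $[2,\infty)$, which holds automatically since the first application occurs at $n+1=2$; and one must confirm that the indexing convention starting at $2-N$ aligns the $N$ initial terms $\Gamma_{i,\,2-N},\ldots,\Gamma_{i,1}$ with exactly the $N$ columns of $M$ in the base case. Neither presents a real difficulty. The substantive content of the proof is the sparsity of $M$, which reduces the computation of $M^{n+1}$ from $M^n$ to the evaluation of a single new column via the scalar recurrence \eqref{eq: Iteration}.
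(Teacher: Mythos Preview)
Your proposal is correct and follows essentially the same approach as the paper: both compute $M^{n+1}=M^n\cdot M$ by right multiplication, use the superdiagonal structure of $M$ to obtain the column-shift relation $[M^{n+1}]_{i,j}=[M^n]_{i,j-1}$ for $j\ge 2$, and identify the new first column with the recurrence \eqref{eq: Iteration}. Your explicit induction is in fact a slightly cleaner packaging of the paper's argument, which instead derives the relations for generic entries $\theta^{(n)}_{i,j}$ and then \emph{defines} $\Gamma$ from them, but the substantive computations are identical.
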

\begin{proof}
Let us assume the most general form of $M^n$
\begin{equation}
M^n=
\begin{pmatrix}
\theta^{(n)}_{1,1} & \cdots & \theta^{(n)}_{1,N} \\
\vdots & \ddots & \vdots \\
\theta^{(n)}_{N,1} & \cdots & \theta^{(n)}_{N,N} \\
\end{pmatrix}
\end{equation}
for some $n \ge 1$. Then
\begin{align*}
M^{n+1} &=
\begin{pmatrix}
\theta^{(n)}_{1,1} & \theta^{(n)}_{1,2} & \cdots & \theta^{(n)}_{1,N} \\
\theta^{(n)}_{2,1} & \theta^{(n)}_{2,2} & \cdots & \theta^{(n)}_{2,N} \\
\vdots & \vdots & \cdots & \vdots \\
\theta^{(n)}_{N,1} & \theta^{(n)}_{N,2} & \cdots & \theta^{(n)}_{N,N} \\
\end{pmatrix}
\cdot
\begin{pmatrix}
\rho_1 & 1 & \cdots  & 0 \\
- \rho_2 & 0 & \cdots & 0 \\
\vdots & \vdots & \cdots & \vdots \\
(-1)^{N-1} ~ \rho_N & 0 & \cdots & 0
\end{pmatrix} \\
\begin{pmatrix}
\theta^{(n+1)}_{1,1} & \theta^{(n+1)}_{1,2} & \cdots & \theta^{(n+1)}_{1,N} \\
\theta^{(n+1)}_{2,1} & \theta^{(n+1)}_{2,2} & \cdots & \theta^{(n+1)}_{2,N} \\
\vdots & \vdots & \cdots & \vdots \\
\theta^{(n+1)}_{N,1} & \theta^{(n+1)}_{N,2} & \cdots & \theta^{(n+1)}_{N,N} \\
\end{pmatrix} &=
\begin{pmatrix}
\left(\sum_{k=1}^N (-1)^{k-1} \rho_k \theta^{(n)}_{1,k}\right) & \theta^{(n)}_{1,1} & \cdots & \theta^{(n)}_{1,N-1} \\
\left(\sum_{k=1}^N (-1)^{k-1} \rho_k \theta^{(n)}_{2,k}\right) & \theta^{(n)}_{2,1} & \cdots & \theta^{(n)}_{2,N-1} \\
\vdots & \vdots & \cdots & \vdots \\
\left(\sum_{k=1}^N (-1)^{k-1} \rho_k \theta^{(n)}_{N,k}\right) & \theta^{(n)}_{N,1} & \cdots & \theta^{(n)}_{N,N-1}
\end{pmatrix}
\end{align*}
Hence
\begin{equation}
\theta^{(n+1)}_{i,1} = \sum_{k=1}^N (-1)^{k-1} \rho_k \theta^{(n)}_{i,k} ~~~ : 1 \le i \le N, n \ge 1
\label{eq: Marker 1}
\end{equation}
and
\begin{equation}
\theta^{(n+1)}_{i,j} = \theta^{(n)}_{i,j-1} ~~~ : 1 \le i \le N,~ 1<j \le N, n \ge 1.
\label{eq: Marker 2}
\end{equation}
We now use \eqref{eq: Marker 2} iteratively to obtain
\begin{equation}
\theta^{(n+1)}_{i,j} = \theta^{(n)}_{i,j-1} = \theta^{(n-1)}_{i,j-2} = \ldots = \theta^{(n-(j-2))}_{i,1} ~~~ : 1 \le i \le N,~ 1<j \le N, n \ge 1.
\end{equation}
Now if we define 
\begin{equation}
\theta^{(n+1)}_{i,1}=\Gamma_{i,n+1} ~~~ : 1 \le i \le N
\label{eq: Marker 3}
\end{equation}
then
\begin{equation}
\theta^{(n+1)}_{i,j} = \theta^{(n-(j-2))}_{i,1} = \Gamma_{i,n-(j-2)} ~~~ : 1 \le i \le N,~ 1<j \le N, n \ge 1.
\label{eq: Marker 4}
\end{equation}
Combining \eqref{eq: Marker 3} and \eqref{eq: Marker 4}; and replacing $n+1$ by $n$ we get
\begin{equation}
\theta^{(n)}_{i,j}=\Gamma_{i,n-(j-1)} ~~~ : 1 \le i,j \le N, n \ge 2.
\label{eq: Marker 5}
\end{equation}

Now, by definition
\begin{equation}
\Gamma_{i,n+1}=\theta^{(n+1)}_{i,1} = \sum_{k=1}^N (-1)^{k-1} \rho_k \theta^{(n)}_{i,k} ~~~ : 1 \le i \le N, n \ge 1.
\end{equation}
Using \eqref{eq: Marker 5} in the right hand side gives us
\begin{equation}
\Gamma_{i,n+1}=\theta^{(n+1)}_{i,1} = \sum_{k=1}^N (-1)^{k-1} \rho_k \Gamma_{i,n-(k-1)} ~~~ : 1 \le i \le N, n \ge 1.
\end{equation}
Finally replacing $n+1$ by $j$, we have an iterative relation for $\Gamma_{i,j}$ as
\begin{equation}
\Gamma_{i,j} = \sum_{k=1}^N (-1)^{k-1} \rho_k \Gamma_{i,j-k} ~~~ : 1 \le i \le N, j \ge 2.
\end{equation}

Now note that substituting $n=1$ in \eqref{eq: Marker 5} gives
\begin{equation}
\theta^{(1)}_{i,j}=\Gamma_{i,2-j} ~~~ : 1 \le i,j \le N.
\end{equation}
However by definition of $\theta^{(n)}_{i,j}$, we have
\begin{equation}
\theta^{(1)}_{i,j} = \left[ M^1 \right]_{i,j} = M_{i,j} ~~~ : 1 \le i,j \le N.
\end{equation}
Hence,
\begin{equation}
\Gamma_{i,2-j} = M_{i,j} ~~~ : 1 \le i,j \le N
\end{equation}
which on replacing $2-j$ by $j$ yields
\begin{equation}
\Gamma_{i,j} = M_{i,2-j} ~~~ : 1 \le i \le N, -N + 2 \le j \le 1
\end{equation}
\end{proof}

\section{Motivating the Form of $a_n$}
\label{app: Motivation}

In this Appendix, we give the motivation for obtaining the fundamental sequence in the form
\begin{equation}
a_n = \sum\limits_{m=0}^{\left[\frac{n}{2}\right]} \left( -1 \right)^m ~^{n-m}C_m~ \delta^m ~ \tau^{n-2m} ~~~ \forall n \ge 0
\label{eq: a_n Definition Appendix}
\end{equation}
and $a_{-1}=0$.
In other terms, we try to understand, how the matrix
\begin{equation}
M = \begin{pmatrix}
\tau & 1 \\
-\delta & 0
\end{pmatrix}.
\label{eq: M Definition Appendix}
\end{equation}
yields
\begin{equation}
M^n=\begin{pmatrix}
a_n & a_{n-1} \\
-\delta a_{n-1} & -\delta a_{n-2}
\end{pmatrix} ~~~ \forall n \in \mathbb{N}
\label{eq: M^n Form Appendix}
\end{equation}
with $a_n$ defined in \eqref{eq: a_n Definition Appendix}.

For an matrix general $2 \times 2$ matrix $M$, if we assume
\begin{equation}
M^n = \begin{pmatrix}
a_n & b_n \\
c_n & d_n
\end{pmatrix}
\label{eq: General n power matrix}
\end{equation}
then the sequence
\begin{equation}
M, M^2, M^3, \ldots, M^n
\end{equation}
or
\begin{equation}
\begin{pmatrix}
a_1 & b_1 \\
c_1 & d_1
\end{pmatrix},
\begin{pmatrix}
a_2 & b_2 \\
c_2 & d_2
\end{pmatrix},
\begin{pmatrix}
a_3 & b_3 \\
c_3 & d_3
\end{pmatrix},
\ldots,
\begin{pmatrix}
a_n & b_n \\
c_n & d_n
\end{pmatrix}
\end{equation}
is a set of four sequences: $\left\lbrace a_n \right\rbrace, \left\lbrace b_n \right\rbrace, \left\lbrace c_n \right\rbrace$ and $\left\lbrace d_n \right\rbrace$.

The aim of the section is to show that these four sequences are restricted by constraints that allow the matrix to be expressed in terms of a single sequence.

To show this, we assume that \eqref{eq: General n power matrix} holds for the matrix defined in \eqref{eq: M Definition Appendix}. Then
\begin{align*}
M^{n+1} &= M^n.M \\
~&~\\
\begin{pmatrix}
a_{n+1} & b_{n+1} \\
c_{n+1} & d_{n+1}
\end{pmatrix}
&=
\begin{pmatrix}
a_n & b_n \\
c_n & d_n
\end{pmatrix}
\begin{pmatrix}
\tau & 1 \\
-\delta & 0
\end{pmatrix}\\
~&~\\
\begin{pmatrix}
a_{n+1} & b_{n+1} \\
c_{n+1} & d_{n+1}
\end{pmatrix}
&=
\begin{pmatrix}
\tau a_n - \delta b_n & a_n \\
\tau c_n - \delta d_n & c_n
\end{pmatrix}
\end{align*}

Hence
\begin{align*}
b_{n+1} &= a_n \\
d_{n+1} &= c_n
\end{align*}
or
\begin{equation}
b_n = a_{n-1}
\label{eq: Subs 1}
\end{equation}
\begin{equation}
d_n = c_{n-1}
\label{eq: Subs 2}
\end{equation}
and
\begin{equation}
a_{n+1} = \tau a_n - \delta b_n
\label{eq: Subs 3}
\end{equation}
\begin{equation}
c_{n+1} = \tau c_n - \delta d_n.
\label{eq: Subs 4}
\end{equation}

Substituting \eqref{eq: Subs 1} in \eqref{eq: Subs 3} and \eqref{eq: Subs 2} in \eqref{eq: Subs 4}, we get
\begin{equation}
a_{n+1} = \tau a_n - \delta a_{n-1}
\label{eq: a_n Propagation}
\end{equation}
\begin{equation}
c_{n+1} = \tau c_n - \delta c_{n-1}.
\label{eq: c_n Propagation}
\end{equation}

Hence, at the current stage
\begin{equation}
M^n =
\begin{pmatrix}
a_n & a_{n-1} \\
c_n & c_{n-1}
\end{pmatrix}
\end{equation}
with $a_n$ and $c_n$ satisfying \eqref{eq: a_n Propagation} and \eqref{eq: c_n Propagation} respectively.

Moreover, as we know
\begin{equation*}
M^1
=
\begin{pmatrix}
a_1 & a_0 \\
c_1 & c_0
\end{pmatrix}
=
M
=
\begin{pmatrix}
\tau & 1 \\
- \delta & 0
\end{pmatrix},
\end{equation*}
therefore
\begin{equation}
a_1 = \tau, ~ a_0 = 1, ~ c_1 = -\delta, ~ c_0 = 0.
\label{eq: Initial Conditions}
\end{equation}

Using \eqref{eq: a_n Propagation} and \eqref{eq: c_n Propagation} in conjugation with the initial conditions in \eqref{eq: Initial Conditions}, can write the complete sequence of $a_n$ and $c_n$. The first few terms are shown below.

\begin{minipage}{0.45\textwidth}
\begin{align*}
a_1 &= \tau \\
a_2 &= \tau^2 - \delta \\
a_3 &= \tau^3 - 2 \tau \delta \\
a_4 &= \tau^4 - 3 \tau^2 \delta + \delta^2 \\
a_5 &= \tau^5 - 4 \tau^3 \delta + 3 \tau \delta^2 \\
a_6 &= \tau^6 - 5 \tau^4 \delta + 6 \tau^2 \delta^2 - \delta^3 \\
a_7 &= \tau^7 - 6 \tau^5 \delta + 10 \tau^3 \delta^2 - 4 \tau \delta^3 \\
a_8 &= \tau^8 - 7 \tau^6 \delta + 15 \tau^4 \delta^2 - 10 \tau^2 \delta^3 + \delta^4 \\
a_9 &= \tau^9 - 8 \tau^7 \delta + 21 \tau^5 \delta^2 - 20 \tau^3 \delta^3 + 5 \tau \delta^4
\end{align*}
\end{minipage}
\hfill
\begin{minipage}{0.45\textwidth}
\begin{align*}
c_1 &= - \delta \\
c_2 &= - \delta [\tau]  \\
c_3 &= - \delta [\tau^2 - \delta] \\
c_4 &= - \delta [\tau^3 - 2 \tau \delta] \\
c_5 &= - \delta [\tau^4 - 3 \tau^2 \delta + \delta^2] \\
c_6 &= - \delta [\tau^5 - 4 \tau^3 \delta + 3 \tau \delta^2] \\
c_7 &= - \delta [\tau^6 - 5 \tau^4 \delta + 6 \tau^2 \delta^2 - \delta^3] \\
c_8 &= - \delta [\tau^7 - 6 \tau^5 \delta + 10 \tau^3 \delta^2 - 4 \tau \delta^3] \\
c_9 &= - \delta [\tau^8 - 7 \tau^6 \delta + 15 \tau^4 \delta^2 - 10 \tau^2 \delta^3 + \delta^4]
\end{align*}
\end{minipage}

It may be noted that if written in the appropriate form, it becomes clear that
\begin{equation}
c_n = - \delta a_n ~~~ \forall n \in \mathbb{N}.
\end{equation}
In order to extend the result to $c_0$, we define $a_{-1}=0$ which allows us to write $M^n$ as
\begin{equation}
M^n =
\begin{pmatrix}
a_n & a_{n-1} \\
- \delta a_{n-1} & - \delta a_{n-2}
\end{pmatrix}
\end{equation}
where
\begin{equation}
a_{n+1} = \tau a_n - \delta a_{n-1} ~~~ \forall n \ge 0
\end{equation}
and $a_{-1}=0$.

To obtain the analytic expression for $a_n$, we note the following in expansions given earlier. when terms under summation for a particular $n$ value are arranged in the ascending order of powers of $\delta$,
\begin{itemize}
\item There are $\left[ \frac{n}{2} \right] + 1$ terms in the series.
\item Powers of $\delta$ start from $0$ and increase in steps of 1.
\item Powers of $\tau$ start from $n$ and decrease in steps of 2.
\item Sign of each term alternates between plus and minus starting from a plus
\item A numerical coefficient precedes each term.
\end{itemize}
Hence $a_n$ can be written as
\begin{equation}
a_n = \sum_{m=0}^{\left[\frac{n}{2}\right]} \left( -1 \right)^m ~\eta_{m,n}~ \delta^m \tau^{n-2m}
\end{equation}

\begin{table}
\caption{List of all $\eta_{m,n}$ values}
\begin{tabular}{|c|ccccc|}
\hline
$\eta_{m,n}$ & 0 & 1 & 2 & 3 & 4 \\ \hline

1 & \color{red} ~~1~~ & ~~~~~ & ~~~~~ & ~~~~~ & ~~~~~ \\ 

2 & \color{blue} 1 & \color{red} 1 & ~ & ~ & ~ \\ 

3 & \color{magenta} 1 & \color{blue} 2 & ~ & ~ & ~ \\ 

4 & \color{cyan} 1 & \color{magenta} 3 & \color{blue} 1 & ~ & ~ \\ 

5 & \color{red} 1 & \color{cyan} 4 & \color{magenta} 3 & ~ & ~ \\ 

6 & \color{blue} 1 & \color{red} 5 & \color{cyan} 6 & \color{magenta} 1 & ~ \\ 

7 & \color{magenta} 1 & \color{blue} 6 & \color{red} 10 & \color{cyan} 4 & ~ \\ 

8 & \color{cyan} 1 & \color{magenta} 7 & \color{blue} 15 & \color{red} 10 & \color{cyan} 1 \\ 

9 & \color{red} 1 & \color{cyan} 8 & \color{magenta} 21 & \color{blue} 20 & \color{red} 5 \\ \hline

\end{tabular} 
\label{tab: eta Values}
\end{table}

A look at the $\eta_{m,n}$ values in Table \ref{tab: eta Values} reveal that
\begin{equation}
\eta_{m,n} = \eta_{m,n-1} + \eta_{m-1,n-2}
\end{equation}
which seems similar to the properties of the binomial coefficients. In fact,
\begin{equation}
\eta_{m,n} = ~^{n-m}C_m~
\end{equation}
and the structure of the Pascal's triangle might be evidently seen in the table.

\section{The Progression Rule of Fundamental Sequence}
\label{app: Fundamental Sequence}

\begin{lemma}
The $n^{th}$ term of the sequence defined in \eqref{eq: a_n Parameters} is related to its two preceding terms by the relation
\begin{equation}
a_n = \tau a_{n-1} - \delta a_{n-2}.
\label{eq: Propagation Rule Appendix}
\end{equation}
\label{le: The Lemma Appendix}
\end{lemma}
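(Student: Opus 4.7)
The plan is to verify the recurrence \eqref{eq: Propagation Rule Appendix} by direct substitution of the closed form \eqref{eq: a_n Parameters} into the right-hand side, and then to collapse the resulting double sum using Pascal's identity. The calculation is straightforward in spirit: the combinatorial pattern in Table~\ref{tab: eta Values} already displays the Pascal-triangle structure, and the only real work is to make the index manipulations and summation limits rigorous.

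First I would write
\begin{equation*}
\tau a_{n-1} = \sum_{m=0}^{\lfloor (n-1)/2\rfloor} (-1)^m \binom{n-1-m}{m} \delta^m \tau^{n-2m},
\end{equation*}
and, for $\delta a_{n-2}$, perform the index shift $m \mapsto m-1$ so that every term carries the same monomial $\delta^m \tau^{n-2m}$:
\begin{equation*}
-\delta a_{n-2} = \sum_{m=1}^{\lfloor (n-2)/2\rfloor + 1} (-1)^m \binom{n-1-m}{m-1} \delta^m \tau^{n-2m}.
\end{equation*}
Adding the two expressions and extending each summation range to $0 \le m \le \lfloor n/2 \rfloor$ (using the convention that $\binom{k}{j}=0$ when $j<0$ or $j>k$, which kills exactly the terms that are absent from one sum or the other), one obtains
\begin{equation*}
\tau a_{n-1} - \delta a_{n-2} = \sum_{m=0}^{\lfloor n/2\rfloor} (-1)^m \left[\binom{n-1-m}{m} + \binom{n-1-m}{m-1}\right] \delta^m \tau^{n-2m}.
\end{equation*}

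The second step is to invoke Pascal's identity $\binom{n-1-m}{m} + \binom{n-1-m}{m-1} = \binom{n-m}{m}$, which converts the bracketed sum of binomial coefficients into $\binom{n-m}{m}$. The right-hand side then matches the defining expression \eqref{eq: a_n Parameters} for $a_n$ term-by-term, which completes the argument.

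The one delicate point, and the only step that requires care rather than mechanical algebra, is reconciling the three summation limits $\lfloor (n-1)/2 \rfloor$, $\lfloor (n-2)/2 \rfloor + 1$, and $\lfloor n/2 \rfloor$, since these differ depending on the parity of $n$. I would handle this by splitting into even and odd $n$ and checking explicitly that, in each parity class, the boundary terms added or dropped by extending the range to $0 \le m \le \lfloor n/2 \rfloor$ contribute zero under the vanishing convention for out-of-range binomial coefficients; equivalently one can verify by direct inspection that at $m=0$ and $m=\lfloor n/2\rfloor$ the Pascal identity still produces the correct coefficient of $a_n$. Once this bookkeeping is done, no further computation is required.
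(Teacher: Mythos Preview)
Your proposal is correct and follows essentially the same route as the paper: substitute the closed form, shift the index in the $\delta a_{n-2}$ sum, and combine via Pascal's identity $\binom{n-1-m}{m}+\binom{n-1-m}{m-1}=\binom{n-m}{m}$. The paper carries out the even/odd split explicitly from the outset rather than invoking the vanishing convention for out-of-range binomials, but the underlying computation and the key identity are identical.
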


\begin{proof}
For $n=0$,
\begin{align*}
a_{n+1} &= a_1 = \tau \\
a_n &= a_0 = 1 \\
a_{n-1} &= a_{-1} = 0
\end{align*}
Hence \eqref{eq: Propagation Rule Appendix} is true for $n=0$.

For the other $n$, we prove it separately for even and odd $n$.

If $n$ is even, then
\begin{equation}
\left[\dfrac{n}{2}\right]=\dfrac{n}{2}, ~~~ \left[\dfrac{n-1}{2}\right]=\dfrac{n}{2}-1, ~~~ \left[\dfrac{n+1}{2}\right]=\dfrac{n}{2}.
\end{equation}

Now,
\begin{align*}
 \tau a_n - \delta a_{n-1} &= \tau \sum_{m=0}^{\left[\dfrac{n}{2}\right]} \left( (-1)^m ~^{n-m}C_m~ \delta^m \tau^{n-2m} \right) - \delta \sum_{m=0}^{\left[\dfrac{n-1}{2}\right]} \left( (-1)^m ~^{n-1-m}C_m~ \delta^m \tau^{n-1-2m} \right)\\
~&= \sum_{m=0}^{\dfrac{n}{2}} \left( (-1)^m ~^{n-m}C_m~ \delta^m \tau^{n+1-2m} \right) - \sum_{m=0}^{\dfrac{n}{2}-1} \left( (-1)^m ~^{n-1-m}C_m~ \delta^{m+1} \tau^{n-1-2m} \right)\\
~&= \tau^{n+1} + \sum_{m=1}^{\dfrac{n}{2}} \left( (-1)^m ~^{n-m}C_m~ \delta^m \tau^{n+1-2m} \right) - \sum_{m=0}^{\dfrac{n}{2}-1} \left( (-1)^m ~^{n-1-m}C_m~ \delta^{m+1} \tau^{n-1-2m} \right)\\
~&= \tau^{n+1} + \sum_{m=1}^{\dfrac{n}{2}} \left( (-1)^m ~^{n-m}C_m~ \delta^m \tau^{n+1-2m} \right) + \sum_{m=1}^{\dfrac{n}{2}} \left( (-1)^m ~^{n-m}C_{m-1}~ \delta^m \tau^{n+1-2m} \right)\\
~&= \tau^{n+1} + \sum_{m=1}^{\dfrac{n}{2}} \left( (-1)^m \left(~^{n-m}C_m~ + ~^{n-m}C_{m-1}~ \right) \delta^m \tau^{n+1-2m} \right)\\
~&= \tau^{n+1} + \sum_{m=1}^{\dfrac{n}{2}} \left( (-1)^m ~^{n+1-m}C_m~ \delta^m \tau^{n+1-2m} \right)\\
~&= \sum_{m=0}^{\left[\dfrac{n+1}{2}\right]} \left( (-1)^m ~^{n+1-m}C_m~ \delta^m \tau^{n+1-2m} \right)\\
~&= a_{n+1}.
\end{align*}

If $n$ is odd then
\begin{equation}
\left[\frac{n}{2}\right]=\frac{n-1}{2}, ~~~ \left[\frac{n-1}{2}\right]=\frac{n-1}{2}, ~~~ \left[\frac{n+1}{2}\right]=\frac{n+1}{2}.
\end{equation}

Now,
\begin{align*}
 \tau a_n - \delta a_{n-1} &= \tau \sum_{m=0}^{\left[\frac{n}{2}\right]} \left( (-1)^m ~^{n-m}C_m~ \delta^m \tau^{n-2m} \right) - \delta \sum_{m=0}^{\left[\frac{n-1}{2}\right]} \left( (-1)^m ~^{n-1-m}C_m~ \delta^m \tau^{n-1-2m} \right)\\
~&= \sum_{m=0}^{\frac{n-1}{2}} \left( (-1)^m ~^{n-m}C_m~ \delta^m \tau^{n+1-2m} \right) - \sum_{m=0}^{\frac{n-1}{2}-1} \left( (-1)^m ~^{n-1-m}C_m~ \delta^{m+1} \tau^{n-1-2m} \right)\\
~&= \tau^{n+1} + \sum_{m=1}^{\frac{n-1}{2}} \left( (-1)^m ~^{n-m}C_m~ \delta^m \tau^{n+1-2m} \right) - \sum_{m=0}^{\frac{n-3}{2}} \left( (-1)^m ~^{n-1-m}C_m~ \delta^{m+1} \tau^{n-1-2m} \right) - (-1)^{\frac{n-1}{2}} \delta^{\frac{n+1}{2}}\\
~&= \tau^{n+1} + \sum_{m=1}^{\frac{n-1}{2}} \left( (-1)^m ~^{n-m}C_m~ \delta^m \tau^{n+1-2m} \right) + \sum_{m=1}^{\frac{n-1}{2}} \left( (-1)^m ~^{n-m}C_{m-1}~ \delta^m \tau^{n+1-2m} \right) + (-1)^{\frac{n+1}{2}} \delta^{\frac{n+1}{2}}\\
~&= \tau^{n+1} + \sum_{m=1}^{\frac{n-1}{2}} \left( (-1)^m \left(~^{n-m}C_m~ + ~^{n-m}C_{m-1}~ \right) \delta^m \tau^{n+1-2m} \right) + (-1)^{\frac{n+1}{2}} \delta^{\frac{n+1}{2}}\\
~&= \tau^{n+1} + \sum_{m=1}^{\frac{n-1}{2}} \left( (-1)^m ~^{n+1-m}C_m~ \delta^m \tau^{n+1-2m} \right) + (-1)^{\frac{n+1}{2}} \delta^{\frac{n+1}{2}}\\
~&= \sum_{m=0}^{\left[\frac{n+1}{2}\right]} \left( (-1)^m ~^{n+1-m}C_m~ \delta^m \tau^{n+1-2m} \right)\\
~&= a_{n+1}.
\end{align*}

Hence the result is true for all $n \in \mathbb{N}$.

\end{proof}

\section{$M^n$ in Terms of Eigenvalues}
\label{app: Eigen}

\begin{theorem}
Let
\begin{equation}
M=\begin{pmatrix}
\tau & 1 \\
-\delta & 0
\end{pmatrix}
\end{equation}
with the eigenvalues
\begin{equation}
\lambda_{1,2} = \frac{\tau \pm \sqrt{\tau^2-4\delta}}{2}
\end{equation}
then
\begin{equation}
M^n=\begin{pmatrix}
a_n & a_{n-1} \\
-\delta a_{n-1} & -\delta a_{n-2}
\end{pmatrix}
~~~ \forall n \in \mathbb{N}
\end{equation}
where
\begin{equation}
a_n = \frac{\lambda_1^{n+1}-\lambda_2^{n+1}}{\lambda_1-\lambda_2}.
\end{equation}
\end{theorem}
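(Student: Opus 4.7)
My plan is to leverage the structural result already established in Section~6 and Appendix~\ref{app: Fundamental Sequence}: namely, that $M^n$ has the block form
\begin{equation*}
M^n = \begin{pmatrix} a_n & a_{n-1} \\ -\delta a_{n-1} & -\delta a_{n-2} \end{pmatrix}
\end{equation*}
where the sequence $a_n$ is characterized uniquely by the recurrence $a_n = \tau a_{n-1} - \delta a_{n-2}$ together with the initial conditions $a_0 = 1$ and $a_{-1} = 0$. Given this, the task reduces to showing that the candidate $\tilde a_n := (\lambda_1^{n+1} - \lambda_2^{n+1})/(\lambda_1 - \lambda_2)$ satisfies the same recurrence and the same initial conditions. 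Uniqueness of solutions to a second-order linear recurrence with two prescribed consecutive values then forces $\tilde a_n = a_n$ for all $n \in \mathbb{N}$.

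First I would verify the initial conditions directly: $\tilde a_{-1} = (\lambda_1^0 - \lambda_2^0)/(\lambda_1 - \lambda_2) = 0$ and $\tilde a_0 = (\lambda_1 - \lambda_2)/(\lambda_1 - \lambda_2) = 1$. Next I would verify the recurrence using Vieta's formulas $\lambda_1 + \lambda_2 = \tau$ and $\lambda_1 \lambda_2 = \delta$. The computation
\begin{equation*}
\tau \tilde a_{n-1} - \delta \tilde a_{n-2} = \frac{(\lambda_1+\lambda_2)(\lambda_1^n - \lambda_2^n) - \lambda_1\lambda_2(\lambda_1^{n-1} - \lambda_2^{n-1})}{\lambda_1 - \lambda_2}
\end{equation*}
expands in the numerator to $\lambda_1^{n+1} + \lambda_2 \lambda_1^n - \lambda_1 \lambda_2^n - \lambda_2^{n+1} - \lambda_1^n \lambda_2 + \lambda_1 \lambda_2^n = \lambda_1^{n+1} - \lambda_2^{n+1}$, giving exactly $\tilde a_n$. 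Substituting $\tilde a_n$ back into the established block form yields the stated expression for $M^n$.

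The only real obstacle is the degenerate case $\lambda_1 = \lambda_2 = \tau/2$ (i.e.\ $\tau^2 = 4\delta$), in which the closed form is a $0/0$ indeterminate and $M$ is not diagonalizable. I would handle this by continuity: since the matrix $M$ depends continuously (in fact polynomially) on $(\tau,\delta)$, and both sides of the identity $M^n = \begin{pmatrix} \tilde a_n & \tilde a_{n-1} \\ -\delta \tilde a_{n-1} & -\delta \tilde a_{n-2} \end{pmatrix}$ extend continuously to the diagonal $\lambda_1 = \lambda_2$ (the limit of $\tilde a_n$ being $(n+1)\lambda^n$, obtainable by L'Hôpital's rule or by interpreting the numerator as a geometric sum), the identity remains valid in the limit. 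Alternatively, one can simply note that the recurrence/initial-condition argument above goes through without ever dividing by $\lambda_1 - \lambda_2$ if one works with $\tilde a_n$ as a formal symmetric polynomial in $\lambda_1,\lambda_2$, bypassing the degeneracy entirely.
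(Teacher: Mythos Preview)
Your argument is correct but takes a different route from the paper's own proof. The paper proceeds by explicit diagonalisation: it writes $M = U D U^{-1}$ with $U$ the matrix of eigenvectors and $D = \mathrm{diag}(\lambda_1,\lambda_2)$, computes $M^n = U D^n U^{-1}$ entry by entry, and then recognises the resulting entries as $a_n$, $a_{n-1}$, $-\delta a_{n-1}$, $-\delta a_{n-2}$ with $a_n = (\lambda_1^{n+1}-\lambda_2^{n+1})/(\lambda_1-\lambda_2)$. Your proof instead imports the block form of $M^n$ and the recurrence $a_n = \tau a_{n-1} - \delta a_{n-2}$ from the earlier sections, and then simply checks that the eigenvalue expression satisfies the same recurrence and initial data. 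The trade-off is that the paper's version is self-contained (it rederives the block structure from scratch via diagonalisation), whereas yours is shorter and avoids eigenvector computations by piggy-backing on the recurrence already proved. A genuine advantage of your approach is that it copes cleanly with the repeated-eigenvalue case $\tau^2 = 4\delta$, which the paper's diagonalisation argument silently excludes since $U$ is then singular; your continuity/symmetric-polynomial remark closes that gap.
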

\begin{proof}
Let $\lambda_1$ and $\lambda_2$ be the eigenvectors of $M$. Substituting the trace $\tau=\lambda_1+\lambda_2$ and determinant $\delta=\lambda_1\lambda_2$, we have
\begin{equation}
M=
\begin{pmatrix}
\lambda_1 + \lambda_2 & 1 \\
- \lambda_1 \lambda_2 & 0
\end{pmatrix}.
\end{equation}
Simple calculation of shows that the eigenvector corresponding to $\lambda_1$ is $\begin{pmatrix} 1 \\ -\lambda_2 \end{pmatrix}$ and that corresponding to $\lambda_2$ is $\begin{pmatrix} 1 \\ -\lambda_1 \end{pmatrix}$. Hence, we may construct a matrix $U$ with eigenvectors as columns,
\begin{equation}
U = \begin{pmatrix}
1 & 1 \\
-\lambda_2 & -\lambda_1
\end{pmatrix}
\end{equation}
and a diagonal matrix $D$ with the eigenvalues
\begin{equation}
D = \begin{pmatrix}
\lambda_1 & 0 \\
0 & \lambda_2
\end{pmatrix}
\end{equation}
such that
\begin{equation}
M=UDU^{-1}.
\end{equation}
and hence
\begin{equation}
M^n=UD^nU^{-1}.
\end{equation}
Substituting the values, we get
\begin{align*}
M^n &= \begin{pmatrix}
1 & 1 \\
-\lambda_2 & -\lambda_1
\end{pmatrix}
\begin{pmatrix}
\lambda_1 & 0 \\
0 & \lambda_2
\end{pmatrix}^n
\begin{pmatrix}
1 & 1 \\
-\lambda_2 & -\lambda_1
\end{pmatrix}^{-1} \\
~&~\\
~ &= \frac{1}{\lambda_2 - \lambda_1}
\begin{pmatrix}
1 & 1 \\
-\lambda_2 & -\lambda_1
\end{pmatrix}
\begin{pmatrix}
\lambda_1^n & 0 \\
0 & \lambda_2^n
\end{pmatrix}
\begin{pmatrix}
-\lambda_1 & -1 \\
\lambda_2 & 1
\end{pmatrix} \\
~&~\\
~ &= \frac{1}{\lambda_2 - \lambda_1}
\begin{pmatrix}
1 & 1 \\
-\lambda_2 & -\lambda_1
\end{pmatrix}
\begin{pmatrix}
-\lambda_1^{n+1} & -\lambda_2^n \\
\lambda_2^{n+1} & \lambda_2^n
\end{pmatrix} \\
~&~\\
~&= \frac{1}{\lambda_2 - \lambda_1}
\begin{pmatrix}
\lambda_2^{n+1} - \lambda_1^{n+1} & \lambda_2^n - \lambda_1^n \\
\lambda_1^{n+1}\lambda_2 - \lambda_2^{n+1}\lambda_1 & \lambda_1^n\lambda_2 - \lambda_2^n\lambda_1
\end{pmatrix}
\end{align*}
which can be recast as
\begin{equation}
M^n=\begin{pmatrix}
a_n & a_{n-1} \\
-\delta a_{n-1} & -\delta a_{n-2}
\end{pmatrix}
\end{equation}
with
\begin{equation}
a_n = \frac{\lambda_1^{n+1}-\lambda_2^{n+1}}{\lambda_1-\lambda_2}.
\end{equation}
\end{proof}

\section{The Form of $\phi_n$}
\label{app: phi}

\begin{corollary}
The sum of the geometric progression
\begin{equation*}
\phi_n = I + M + M^2 + \ldots + M^n
\end{equation*}
is given as
\begin{equation}
\phi_n = \begin{pmatrix}
f_n & f_{n-1} \\
- \delta f_{n-1} & 1 - \delta f_{n-2}
\end{pmatrix}
\label{eq: phi_n Form Appendix}
\end{equation}
where
\begin{equation}
f_n = \frac{1 - a_n + \delta a_{n-1}}{1 - \tau + \delta}.
\label{eq: f_n Form Appendix}
\end{equation}
\end{corollary}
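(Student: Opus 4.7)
The plan is to use the closed form \eqref{eq: phi Definition} and compute $\phi_n = (I - M)^{-1}(I - M^k)$ directly, where $k$ is the appropriate number of terms in the geometric series, using the explicit expression for $M^k$ from \eqref{eq: M^n Form 2D}. This will write $\phi_n$ as a product of two $2\times 2$ matrices whose entries are rational functions of $\tau$, $\delta$ and the $a$-sequence; simplification will then yield the claimed form.

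First I would invert $I - M$. Since
\[ I - M = \begin{pmatrix} 1 - \tau & -1 \\ \delta & 1 \end{pmatrix}, \]
its determinant is $1 - \tau + \delta$, so provided this is nonzero,
\[ (I - M)^{-1} = \frac{1}{1 - \tau + \delta}\begin{pmatrix} 1 & 1 \\ -\delta & 1 - \tau \end{pmatrix}. \]
Next I would expand $I - M^k$ using \eqref{eq: M^n Form 2D} and carry out the $2\times 2$ matrix multiplication with $(I-M)^{-1}$ on the left.

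The $(1,1)$ and $(1,2)$ entries come out essentially for free: they are immediately $(1 - a_n + \delta a_{n-1})/(1 - \tau + \delta) = f_n$ and $(1 - a_{n-1} + \delta a_{n-2})/(1 - \tau + \delta) = f_{n-1}$, matching the definition \eqref{eq: f_n Form}. For the bottom row the raw entries are messier. Here I would invoke the recurrence $a_k = \tau a_{k-1} - \delta a_{k-2}$ from \eqref{eq: Propagation Rule 2D} with suitable index shifts to collapse combinations such as $\delta a_n - \tau \delta a_{n-1}$ into $-\delta^2 a_{n-2}$, which is the identity needed to rewrite the $(2,1)$ entry as $-\delta f_{n-1}$. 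An analogous manipulation, using the recurrence one step lower to reduce $a_{n-1} - \tau a_{n-2}$ to $-\delta a_{n-3}$, brings the $(2,2)$ entry into the form $1 - \delta f_{n-2}$.

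The main obstacle is purely bookkeeping: tracking the index shifts in the two bottom entries so that the raw products line up cleanly with the definition of $f_k$. Conceptually the argument is clean---the geometric series formula converts the closed-form representation of $M^k$ in terms of a single auxiliary sequence $\{a_n\}$ into a parallel closed-form representation of $\phi_n$ in terms of $\{f_n\}$. Note also that the whole argument relies only on the recurrence \eqref{eq: Propagation Rule 2D}, not on either explicit representation \eqref{eq: a_n Parameters} or \eqref{eq: a_n Eigen} of $a_n$, so the corollary holds regardless of which representation of $a_n$ one uses downstream.
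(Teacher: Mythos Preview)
Your proposal is correct and follows essentially the same route as the paper's own proof: compute $(I-M)^{-1}$ explicitly, multiply it on the left of $I-M^n$ written via \eqref{eq: M^n Form 2D}, read off $f_n$ and $f_{n-1}$ in the top row, and then use the recurrence \eqref{eq: Propagation Rule 2D} (with the index shifts you describe) to reduce the bottom-row entries to $-\delta f_{n-1}$ and $1-\delta f_{n-2}$. Your closing remark that the argument depends only on the recurrence and not on the closed forms \eqref{eq: a_n Parameters} or \eqref{eq: a_n Eigen} is also exactly the observation the paper makes.
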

\begin{proof}
Using the formula for sum of GP, we can write $\phi_n$ as
\begin{equation}
\phi_n = \frac{I - M^n}{I - M}.
\end{equation}

Hence
\begin{align*}
\phi_n &= \left[
\begin{pmatrix}
1 & 0 \\
0 & 1
\end{pmatrix}
-
\begin{pmatrix}
\tau & 1 \\
-\delta & 0
\end{pmatrix}
\right]^{-1}
\left[
\begin{pmatrix}
1 & 0 \\
0 & 1
\end{pmatrix}
-
\begin{pmatrix}
a_n & a_{n-1} \\
-\delta a_{n-1} & -\delta a_{n-2}
\end{pmatrix}
\right] \\
~ & ~ \\
~ &= \begin{pmatrix}
1-\tau & -1 \\
\delta & 1
\end{pmatrix}^{-1}
\begin{pmatrix}
1 - a_n & -a_{n-1} \\
\delta a_{n-1} & 1 + \delta a_{n-2}
\end{pmatrix} \\
~ & ~ \\
~ &= \frac{1}{1-\tau+\delta}
\begin{pmatrix}
1 & 1 \\
-\delta & 1 - \tau
\end{pmatrix}
\begin{pmatrix}
1 - a_n & -a_{n-1} \\
\delta a_{n-1} & 1 + \delta a_{n-2}
\end{pmatrix} \\
~ & ~ \\
~ &= \frac{1}{1-\tau+\delta}
\begin{pmatrix}
1 - a_n + \delta a_{n-1} & 1 - a_{n-1} + \delta a_{n-2} \\
- \delta \left( 1 - a_n \right) + \delta a_{n-1} \left( 1 - \tau \right) & \delta a_{n-1} + \left( 1 - \tau \right) \left( 1 + \delta a_{n-2} \right)
\end{pmatrix} \\
~ & ~ \\
~ &= \begin{pmatrix}
\frac{1 - a_n + \delta a_{n-1}}{1-\tau+\delta} & \frac{1 - a_{n-1} + \delta a_{n-2}}{1-\tau+\delta} \\
\frac{- \delta \left( 1 - a_n \right) + \delta a_{n-1} \left( 1 - \tau \right)}{1-\tau+\delta} & \frac{\delta a_{n-1} + \left( 1 - \tau \right) \left( 1 + \delta a_{n-2} \right)}{1-\tau+\delta}
\end{pmatrix} \\
\end{align*}

Currently, $\phi_n$ is of the form
\begin{equation}
\phi_n = \begin{pmatrix}
f_n & f_{n-1} \\
\sigma_1 & \sigma_2
\end{pmatrix}
\end{equation}
with
\begin{equation}
\sigma_1 = \frac{- \delta \left( 1 - a_n \right) + \delta a_{n-1} \left( 1 - \tau \right)}{1-\tau+\delta}
\end{equation}
and
\begin{equation}
\sigma_2 = \frac{\delta a_{n-1} + \left( 1 - \tau \right) \left( 1 + \delta a_{n-2} \right)}{1-\tau+\delta}.
\end{equation}
Using lemma \ref{le: The Lemma Appendix}, we simplify $\sigma_1$ and $\sigma_2$ as
\begin{align*}
\sigma_1 &= \frac{- \delta \left( 1 - a_n \right) + \delta a_{n-1} \left( 1 - \tau \right)}{1-\tau+\delta} \\
~ &= \frac{- \delta + \delta a_n + \delta a_{n-1} - \delta \tau a_{n-1}}{1-\tau+\delta} \\
~ &= \frac{-\delta \left( 1 - a_n - a_{n-1} + \tau a_{n-1} \right)}{1 - \tau + \delta} \\
~ &= \frac{-\delta \left( 1 - a_n - a_{n-1} + a_n +\delta a_{n-2} \right)}{1 - \tau + \delta} \\
~ &= \frac{-\delta \left( 1 - a_{n-1} +\delta a_{n-2} \right)}{1 - \tau + \delta} \\
~ &= - \delta f_{n-1}.
\end{align*}
And
\begin{align*}
\sigma_2 &= \frac{\delta a_{n-1} + \left( 1 - \tau \right) \left( 1 + \delta a_{n-2} \right)}{1-\tau+\delta} \\
~ &= \frac{\delta a_{n-1} + 1 + \delta a_{n-2} - \tau - \tau \delta a_{n-2}}{1-\tau+\delta} \\
~ &= \frac{1 - \tau + \delta \left( a_{n-1} + a_{n-2} - \tau a_{n-2} \right)}{1-\tau+\delta} \\
~ &= \frac{1 - \tau + \delta \left( a_{n-1} - \delta a_{n-3} \right)}{1-\tau+\delta} \\
~ &= \frac{1 - \tau + \delta \left( - 1 + 1 + a_{n-1} - \delta a_{n-3} \right)}{1-\tau+\delta} \\
~ &= \frac{1 - \tau + \delta + \delta \left( - 1 + a_{n-1} - \delta a_{n-3} \right)}{1-\tau+\delta} \\
~ &= \frac{1 - \tau + \delta - \delta \left( 1 - a_{n-1} + \delta a_{n-3} \right)}{1-\tau+\delta} \\
~ &= 1 - \frac{\delta \left( 1 - a_{n-1} + \delta a_{n-3} \right)}{1-\tau+\delta} \\
~ &= 1 - \delta f_{n-2}.
\end{align*}

Therefore,
\begin{equation}
\phi_n =
\begin{pmatrix}
f_n & f_{n-1} \\
-\delta f_{n-1} & 1 - \delta f_{n-2}
\end{pmatrix}.
\end{equation}

\end{proof}

\end{document}